\documentclass[reqno]{amsart}

\usepackage{amsmath}
\usepackage{amssymb}
\usepackage{amsthm}
\usepackage{xspace}
\usepackage{float}  \restylefloat{figure}
\usepackage{enumerate}
\usepackage{graphicx}
\usepackage{cancel}
\usepackage{pinlabel}
\usepackage{tikz}

\theoremstyle{plain}

\newtheorem*{conjecture}{Conjecture}
\newtheorem{proposition}{Proposition}
\newtheorem{lemma}{Lemma}

\theoremstyle{definition}

\theoremstyle{remark}
\newtheorem*{remark}{Remark}

\def\c{\mathbb C}
\def\p{\mathbb P}
\def\r{\mathbb R}
\def\z{\mathbb Z}

\def\chp{H_{\c}^2}

\def\bchp{{\partial\chp}}

\def\calC{\mathcal{C}}
\def\calN{\mathcal{N}}
\def\calT{\mathcal{T}}

\def\al{{\alpha}}

\def\Ga{{\Gamma}}

\def\om{{\omega}}

\def\EE{E}

\def\st{\,\,\big|\,\,}
\def\<{\langle}
\def\>{\rangle}

\let\ge=\geqslant
\let\le=\leqslant

\font\msbm=msbm10
\def\semiprod{\hbox{\msbm\char111}}

\def\defit{\it}

\newcommand{\abs}[1]{\left\lvert#1\right\rvert}

\DeclareMathOperator{\Aut}{Aut}
\DeclareMathOperator{\dist}{dist}
\DeclareMathOperator{\Id}{Id}
\DeclareMathOperator{\PU}{PU}

\DeclareMathOperator{\trace}{trace}
\DeclareMathOperator{\U}{U}

\let\Im=\undefined \DeclareMathOperator{\Im}{Im}
\let\Re=\undefined \DeclareMathOperator{\Re}{Re}
\let\mod=\undefined \DeclareMathOperator{\mod}{mod}


\author[Sam Povall]{Sam Povall}
\address{Department of Mathematical Sciences, University of Liverpool, Peach Street, Liverpool L69~7ZL, UK. Current address: Department of Mathematics \& Statistics, University of Melbourne, Parkville, VIC, 3052, Australia}
\email{sam.povall@unimelb.edu.au}
\author[Anna Pratoussevitch]{Anna Pratoussevitch}
\address{Department of Mathematical Sciences, University of Liverpool, Peach Street, Liverpool L69~7ZL, UK}
\email{annap@liverpool.ac.uk}

\title[Complex Hyperbolic Triangle Groups of Type ${[}m,m,0;3,3,2{]}$]{Complex Hyperbolic Triangle Groups\\ of Type $[m,m,0;3,3,2]$}

\begin{date}  {\today} \end{date}

\thanks{
S.P.\ acknowledges the financial support from an EPSRC DTA scholarship at the University of Liverpool
and also the partial support by the International Centre for Theoretical Sciences (ICTS)
during the participation in the programmes Geometry, Groups and Dynamics (ICTS/ggd2017/11) and Surface Group Representations and Geometric Structures (ICTS/SGGS2017/11).
A.P.\ also acknowledges the support from the ICTS}

\subjclass[2010]{Primary 51M10; Secondary 32M15, 22E40, 53C55}


\keywords{complex hyperbolic geometry, triangle groups}


\begin{document}

\maketitle

\begin{abstract}
In this paper we study discreteness of complex hyperbolic triangle groups of type \([m,m,0;3,3,2]\),
i.e.\ groups of isometries of the complex hyperbolic plane
generated by three complex reflections of orders \(3,3,2\) in complex geodesics with pairwise distances \(m,m,0\).
For fixed~$m$, the parameter space of such groups is of real dimension one.
We determine intervals in this parameter space that correspond to discrete and to non-discrete triangle groups.
\end{abstract}

\section{Introduction}

Complex hyperbolic triangle groups are groups of isometries of the complex hyperbolic plane
generated by three complex reflections in complex geodesics.
We will focus on the case of ultra-parallel groups, that is, the case where the complex geodesics are pairwise disjoint.
Unlike real reflections, complex reflections can be of arbitrary order.
If an ultra-parallel complex hyperbolic triangle group is generated by reflections of orders $n_1,n_2,n_3$
in complex geodesics $C_1,C_2,C_3$ with the distance between~$C_{k-1}$ and~$C_{k+1}$ equal to~$m_k$ for~$k=1,2,3$,
then we say that the group is of type $[m_1,m_2,m_3;n_1,n_2,n_3]$.
In this paper, we will study discreteness of ultra-parallel complex hyperbolic triangle groups of type $[m,m,0;3,3,2]$,
i.e.\ two of the reflections are of order~$3$ and one is of order~$2$, the fixed point sets of order~$3$ reflections intersect on the boundary of the complex hyperbolic plane ($m_3=0$)
and the other two distances between fixed point sets coincide ($m_1=m_2$).

The deformation space of groups of type $[m,m,0;3,3,2]$ for a given $m$ is of real dimension one,
a group is determined up to an isometry by the angular invariant $\alpha\in[0,2\pi]$, see section~\ref{sec-background}.
Our main aim is to determine an interval in this one-dimensional deformation space such that for all values of the angular invariant in this interval the corresponding triangle group is discrete.
The main result of the paper is the following proposition:

\begin{proposition}
\label{prop1}
A complex hyperbolic triangle group of type \([m,m,0;3,3,2]\)
with angular invariant \(\alpha\) is discrete if 
\[
  m\ge\log_e(3)
  \quad\mbox{and}\quad
  \cos(\alpha)\le-\frac{1}{2}.
\]
\end{proposition}

\noindent
In the previous works~\cite{WG,Mo,MPP}, the authors considered cases where all three complex reflections are involutions.  
Ultra-parallel triangle groups of types $[m,m,0;2,2,2]$ and $[m,m,2m;2,2,2]$ have been considered in \cite{WG},
while groups of type $[m_1,m_2,0;2,2,2]$ have been considered in \cite{MPP} and \cite{Mo}.

To prove Proposition~\ref{prop1}, we use a version of Klein's combination theorem, adapted to the configurations in question.
Two of the generating reflections share a fixed point on the boundary of the complex hyperbolic plane.
We show that the ultra-parallel triangle group satisfies a compression property by carefully studying the structure of the stabilizer of this fixed point and of its subgroup of Heisenberg translations.
The argument starts in a similar way to that for complex reflections of order~$2$,
however for higher order complex reflections the rank of the group of Heisenberg translations is higher,
leading to a quadratic optimisation problem over~$\z^2$ rather than~$\z$.

On the other hand we obtain the following non-discreteness result using a complex hyperbolic version of Shimizu's lemma:

\begin{proposition}
\label{prop2}
A complex hyperbolic triangle group of type $[m,m,0;3,3,2]$ with angular invariant~$\alpha$ is non-discrete if
\[\cos(\alpha)>1-\frac{1}{12\sqrt{3}\cosh^2\left(\frac{m}{2}\right)}.\]
\end{proposition}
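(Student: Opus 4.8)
The plan is to exhibit an explicit one-parameter subgroup on which Shimizu's lemma fails, thereby forcing non-discreteness. Since the geodesic $C_3$ has distance zero from both $C_1$ and $C_2$ (the type being $[m,m,0;3,3,2]$), the reflections $R_1$ and $R_2$ of order $3$ in $C_1$ and $C_2$ fix complex geodesics that meet $C_3$ on the boundary $\bchp$. The strategy is to conjugate so that this common boundary point sits at infinity, realising the relevant isometries as Heisenberg translations or as screw-parabolic maps fixing $\infty$, and to study the cyclic or two-generator subgroup they generate together with the order-$2$ reflection $R_3$. I would first write down, in the upper half-space (Siegel domain) model, the matrices for $R_1,R_2,R_3$ in terms of $m$ and $\alpha$ using the normalisation from section~\ref{background}.

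Next I would identify an element $g$ of the group that is parabolic (or that stabilises the point at infinity), for instance a suitable word such as $R_1 R_2$ or $R_1 R_3$, whose fixed point on $\bchp$ is the common boundary point of the relevant geodesics. Shimizu's lemma, in its complex hyperbolic form, states that if a discrete group contains a (vertical/Heisenberg) translation $T$ fixing $\infty$, then any element $h$ not fixing $\infty$ must satisfy an inequality relating the translation length of $T$, the Cygan displacement, and the ``radius'' of the isometric sphere of $h$ — concretely, the bottom-left entry of $h$ is bounded below in modulus. The plan is to compute this entry for an element $h$ of our group that moves $\infty$ (such as $R_3$ or a conjugate), compute the translation length of the parabolic $T=g$, and show that the Shimizu inequality is \emph{violated} precisely when $\cos(\alpha)$ exceeds the stated threshold $1-\tfrac{1}{36\cosh^2(m/2)}$.

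Concretely, the key steps in order are: (1) fix coordinates so that the shared boundary fixed point is at $\infty$ and record the Heisenberg/Siegel matrices of the generators; (2) isolate a parabolic translation $T$ in the group and compute its Cygan translation length $t(T)$ as a function of $m$ and $\alpha$ — this is where the $\cosh(m/2)$ dependence should enter; (3) pick an element $h$ with $h(\infty)\ne\infty$ and read off the radius $r_h$ of its isometric sphere, which governs the relevant matrix entry; (4) assemble these into the Shimizu inequality and algebraically rearrange the failure condition $t(T)\cdot(\text{something}) < 1$ into the clean form $\cos(\alpha) > 1 - \tfrac{1}{36\cosh^2(m/2)}$. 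The factor $36 = (2\cdot 3)^2$ and the appearance of $\cosh^2(m/2)$ strongly suggest that the order-$3$ generators contribute a factor of $3$ each into the translation length, while the distance $m$ enters through $\cosh(m/2)$, matching the normalisation of the ultra-parallel configuration.

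The main obstacle I anticipate is choosing the right pair of elements — one parabolic fixing $\infty$ and one moving it — so that the resulting inequality is both sharp and simplifies to the advertised closed form. Shimizu-type arguments are sensitive to which word one tests, and a suboptimal choice yields a weaker (larger) threshold. I would therefore expect to try a few candidate words (products and commutators of $R_1,R_2,R_3$ and their powers) and track how the constant in front of $1/\cosh^2(m/2)$ changes, selecting the word that produces exactly $1/36$. A secondary technical point is verifying that the chosen parabolic really is a pure translation (rather than screw-parabolic with a rotational part), since the cleanest form of Shimizu's lemma applies to translations; if a rotational part appears, I would instead invoke the version of the lemma valid for general parabolic elements and check that the rotational contribution does not weaken the bound below the claimed value.
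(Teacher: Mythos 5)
Your high-level route --- Parker's complex hyperbolic Shimizu lemma applied to a Heisenberg translation in the group together with an element not fixing $\infty$ --- is exactly the strategy of the paper, but as written your proposal contains a genuine error in the setup and leaves the decisive computation undone. In the paper's convention $m_k=\dist(C_{k-1},C_{k+1})$, so for type $[m,m,0;3,3,2]$ the entry $0$ is $m_3=\dist(C_1,C_2)$: it is the two order-$3$ mirrors $C_1$ and $C_2$ that meet on the boundary (at $\infty$ in the paper's normalisation, where they are vertical chains), while the order-$2$ mirror $C_3$ lies at distance $m$ from \emph{each} of them. Your claim that ``$C_3$ has distance zero from both $C_1$ and $C_2$'' reverses this, and the single ``common boundary point'' you propose to move to $\infty$ does not exist under that reading. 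Relatedly, both of your candidate words fail: $R_1R_2$ fixes $\infty$ but has rotational part $e^{4\pi i/3}\neq 1$, so it is a screw motion rather than a Heisenberg translation, and $R_1R_3$ does not fix $\infty$ at all. Because the generators have order $3$, one must take words whose rotational parts cancel; the paper uses the length-three word $g=\iota_2\iota_1\iota_2$ (rotational part $e^{2\pi i}=1$), which is a genuine Heisenberg translation.

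The rest of the proposition is precisely the computation you defer. With $r=\cosh(m/2)$ and $\theta=(\pi-\alpha)/2$, one finds that $g=\iota_2\iota_1\iota_2$ is the Heisenberg translation by $(\xi,\nu)=\bigl(2\sqrt{3}\,r\cos(\theta)\,i,\;12\sqrt{3}\,r^2\cos^2(\theta)\bigr)$; taking $h=\iota_3$, which satisfies $h(\infty)=(0,0)$ and has isometric sphere radius $r_h=1$, Shimizu's inequality becomes $1\le\sqrt{|\xi|^4+\nu^2}+4|\xi|^2$, and its failure reduces to $\cos^2(\theta)<1/(72r^2)$, which via $\cos^2(\theta)=\tfrac12(1-\cos(\alpha))$ is exactly $\cos(\alpha)>1-1/(36r^2)$. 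Nothing here drops out of general principles: the constant $1/36$ is not ``$(2\cdot 3)^2$'' numerology but the outcome of this specific substitution, and your stated plan to try words until the constant comes out as $1/36$ presupposes the answer rather than deriving it (and would also need an argument for why a violated Shimizu inequality for \emph{some} word forces non-discreteness, which is immediate only once a concrete valid pair $(g,h)$ is exhibited). In short: right lemma, but the geometry is misdescribed, the proposed test elements do not satisfy the lemma's hypotheses, and the steps that actually produce the bound are missing.
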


Combining these results, we see that there is a gap between the intervals of discreteness and non-discreteness.
This is illustrated in Figure~\ref{fig-gap}.
The figure shows the $(m,\al)$-space.
The light grey box corresponds to discrete groups (Proposition~\ref{prop1}).
The black area corresponds to non-discrete groups (Proposition~\ref{prop2}).


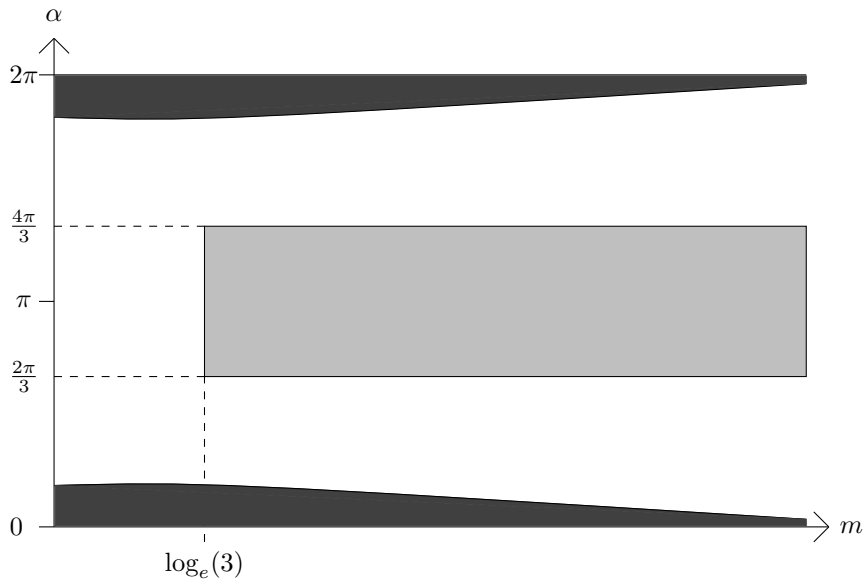
\begin{figure}[h]
\begin{center}
\begin{tikzpicture}
\path[draw] (0.8,1)--(11.3,1);
\path[draw] (0.8,7.014)--(11,7.014);
\path[draw] (1,1)--(1,7.5);
\path[draw] (1,4)--(0.8,4);
\path[draw] (11.1,1.2)--(11.3,1);
\path[draw] (11.1,0.8)--(11.3,1);
\path[draw] (0.8,7.3)--(1,7.5);
\path[draw] (1.2,7.3)--(1,7.5);
\node at (1,7.8) {$\alpha$};
\node at (11.6,1) {$m$};
\node at (0.6,4) {$\pi$};
\node at (0.6,7.014) {$2\pi$};
\node at (0.5,1) {$0$};
\path[draw, dashed] (1,5)--(3,5);
\path[draw, dashed] (1,3)--(3,3);
\node at (0.6,5) {$\frac{4\pi}{3}$};
\node at (0.6,3) {$\frac{2\pi}{3}$};
\path[draw, dashed] (3,0.8)--(3,3);
\node at (3,0.5) {$\log_e(3)$};
\draw[fill= lightgray]  (3,3) -- (11,3) -- (11,5) -- (3,5) -- cycle;
\draw[fill=darkgray,darkgray] (1,6.45) .. controls (3,6.4) .. (11,6.9);
\draw (1,6.449) .. controls (3,6.395) .. (11,6.895);
\draw[fill=darkgray,darkgray]  (1.015,6.45) -- (11,6.9) -- (11,7) -- (1.015,7) -- cycle;
\draw[fill=darkgray,darkgray] (1,1.55) .. controls (3,1.6) .. (11,1.1);
\draw (1,1.551) .. controls (3,1.601) .. (11,1.101);
\draw[fill=darkgray,darkgray]  (1.015,1.55) -- (11,1.1) -- (11,1.015) -- (1.015,1.015) -- cycle;
\end{tikzpicture}
\end{center}
\caption{Discreteness and non-discreteness results in the $(m,\al)$-space.}
\label{fig-gap}
\end{figure}

Ultra-parallel complex hyperbolic triangle groups of type $[m,m,0;n_1,n_2,2]$ with orders $(n_1,n_2)$ other than $(2,2)$ and~$(3,3)$ will be considered in~\cite{Po}.

The paper is organised as follows:
In section~\ref{sec-background} we summarise the necessary background information on complex hyperbolic and Heisenberg geometry.
We introduce the standard parametrisation for ultra-parallel $[m_1,m_2,0;n_1,n_2,n_3]$-triangle groups in section~\ref{sec-param}.
In section~\ref{sec-compression} we use the compression property to derive a discreteness condition for $[m_1,m_2,0;n_1,n_2,n_3]$-groups.
In section~\ref{sec-param-332} we specialise the standard parametrisation to the case of ultra-parallel $[m,m,0;3,3,2]$-triangle groups.
The fixed point sets of order~$3$ reflections intersect on the boundary of the complex hyperbolic plane.
In section~\ref{sec-transl-subgroup} we study the structure of the stabilizer of this intersection point.
In section~\ref{sec-proof-prop1} we use the discreteness conditions from section~\ref{sec-compression} to give a proof of Proposition~\ref{prop1}. 
In section~\ref{sec-proof-prop2} we use a version of Shimizu's lemma to show Proposition~\ref{prop2}. 

We use the following notation: For group elements~$A$ and~$B$, their commutator is $[A,B]=A^{-1}B^{-1}AB$.

\section{Background}

\label{sec-background}

In this section we will give a brief introduction to complex hyperbolic geometry, for further details see \cite{Go,P10}.

\subsection{Complex hyperbolic plane:}
Let $\c^{2,1}$ be the $3$-dimensional complex vector space
equipped with a Hermitian form $\<\cdot,\cdot\>$ of signature $(2,1)$,
e.g.
\[\<z,w\>=z_1\bar{w}_1+z_2\bar{w}_2-z_3\bar{w}_3.\]
If $z\in\c^{2, 1}$ then we know that $\<z, z\>$ is real.
Thus we can define subsets $V_-$, $V_0$ and $V_+$ of $\c^{2,1}$ as follows
\begin{align*}
  V_-&=\{z\in\c^{2,1}\st\<z,z\><0\},\\
  V_0&=\{z\in\c^{2,1}\backslash\{0\}\st\<z,z\>=0\},\\
  V_+&=\{z\in\c^{2,1}\st\<z,z\>>0\}.
\end{align*}
We say that $z\in\c^{2,1}$ is {\defit negative\/}, {\defit null\/} or {\defit positive\/} if $z$ is in $V_-$, $V_0$ or $V_+$ respectively. Define a projection map $\p$ on the points of $\c^{2,1}$ with $z_3\ne0$ as 
\[\p : z=\begin{bmatrix} z_1\\ z_2\\ z_3\end{bmatrix}\mapsto\begin{pmatrix} z_1/z_3\\ z_2/z_3\end{pmatrix}\in\p(\c^{2,1}).\]
That is, provided $z_3\ne0$, 
\[z=(z_1, z_2, z_3)\mapsto[z]=[z_1:z_2:z_3]=\left[\frac{z_1}{z_3}:\frac{z_2}{z_3}:1\right].\]
The {\defit projective model\/} of the complex hyperbolic plane is defined to be the collection of negative lines in $\c^{2,1}$
and its boundary is defined to be the collection of null lines.
That is
\[\chp=\p(V_-)\quad\text{and}\quad\bchp=\p(V_0).\]
The metric on $\chp$, called the {\defit Bergman metric\/}, is given by the distance function~$\rho$ defined by the formula
\[
  \cosh^2\left(\frac{\rho([z],[w])}{2}\right)
  =\frac{\langle{z, w\rangle}\langle{w, z\rangle}}{\langle{z, z\rangle}\langle{w, w\rangle}},
\]
where $[z]$ and $[w]$ are the images of~$z$ and $w$ in $\c^{2,1}$ under the projectivisation map~$\p$. 
The group of holomorphic isometries of~$\chp$ with respect to the Bergman metric
can be identified with the projective unitary group $\PU(2,1)$.

\subsection{Complex geodesics:}
A {\defit complex geodesic\/} is a projectivisation of a 2-di\-men\-sional complex subspace of $\c^{2,1}$.
Any complex geodesic is isometric to \[\{[z:0:1]\st z\in\c\}\] in the projective model.
Any positive vector $c\in V_+$ determines a two-dimensional complex subspace
\[\{z\in\c^{2,1}\st \<c,z\>=0\}.\]
Projecting this subspace we obtain a complex geodesic
\[\p\left(\{z\in\c^{2,1}\st \<c,z\>=0\}\right).\]
Conversely, any complex geodesic is represented by a positive vector $c\in V_+$, 
called a {\defit polar vector\/} of the complex geodesic.
A polar vector is unique up to multiplication by a complex scalar.
We say that the polar vector~$c$ is {\defit normalised\/} if $\<c,c\>=1$.

\bigskip
Let $C_1$ and $C_2$ be complex geodesics with normalised polar vectors~$c_1$ and~$c_2$ respectively.
We call $C_1$ and $C_2$ {\defit ultra-parallel\/} if they have no points of intersection in $\chp\cup\bchp$,
in which case
\[|\<c_1,c_2\>|=\cosh\left(\frac{1}{2}\dist(C_1, C_2)\right)>1,\]
where $\dist(C_1,C_2)$ is the distance between $C_1$ and~$C_2$.
We call $C_1$ and $C_2$ {\defit ideal\/} if they have a point of intersection in $\bchp$,
in which case $|\<c_1,c_2\>|=1$ and $\dist(C_1,C_2)=0$.

\subsection{Complex reflections:}
For a given complex geodesic $C$, a {\defit minimal complex hyperbolic reflection of order~$n$} in~$C$
is the isometry $\iota_C$ in $\PU(2,1)$ of order~$n$ with fixed point set~$C$ given by
\[\iota(z) = -z+(1-\mu)\frac{\<z,c\>}{\<c,c\>}c,\]
where $c$ is a polar vector of~$C$ and $\mu=\exp(2\pi i/n)$.

\subsection{Complex hyperbolic triangle groups:}
A {\defit complex hyperbolic triangle\/} is a triple \((C_1,C_2,C_3)\) of complex geodesics in $\chp$.
A triangle \((C_1,C_2,C_3)\) is a {\defit complex hyperbolic ultra-parallel \([m_1, m_2, m_3]\)-triangle\/}
if the complex geodesics are ultra-parallel at distances $m_k=\dist(C_{k-1},C_{k+1})$ for $k=1,2,3$.
We will allow $m_k=0$ for some or all~$k$.
A {\defit complex hyperbolic ultra-parallel \([m_1,m_2,m_3;n_1,n_2,n_3]\)-triangle group\/}
is a subgroup of \(\PU(2,1)\) generated by complex reflections \(\iota_k\) of order \(n_k\) in the sides \(C_k\)
of a complex hyperbolic ultra-parallel \([m_1,m_2,m_3]\)-triangle \((C_1,C_2,C_3)\).

\subsection{Angular invariant:}
The real dimension of the space of \([m_1,m_2,m_3]\)-triangles
for each fixed triple \(m_1,m_2,m_3\) is equal to one.
We can describe a parametrisation of the space of complex hyperbolic triangles in $\chp$ by means of an angular invariant~$\al$.
We define the {\defit angular invariant\/} $\al$ of the triangle \((C_1, C_2, C_3)\) by
\[\al=\arg\left(\prod_{k=1}^3 \<c_{k-1}, c_{k+1}\>\right),\]
where $c_k$ is the normalised polar vector of the complex geodesic~$C_k$.
We use the following proposition, given in \cite{Pra}, which gives criteria for the existence of a triangle group
in terms of the angular invariant.

\begin{proposition}
\label{traingle-existence}
An $[m_1, m_2, m_3]$-triangle in $\chp$ is determined uniquely up to isometry
by the three distances between the complex geodesics and the angular invariant~$\al$.
For any $\al\in[0, 2\pi]$, an $[m_1, m_2, m_3]$-triangle with angular invariant~$\al$ exists if and only if
\[\cos(\al)<\frac{r_1^2+r_2^2+r_3^2-1}{2r_1r_2r_3},\]
where $r_k=\cosh(m_k/2)$.
\end{proposition}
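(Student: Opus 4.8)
The plan is to translate the geometric problem into linear algebra via the Gram matrix of the polar vectors. Let $c_1,c_2,c_3$ be normalised polar vectors of the sides $C_1,C_2,C_3$ and consider the Hermitian matrix $G=(\langle c_i,c_j\rangle)_{i,j}$. Its diagonal entries are $\langle c_k,c_k\rangle=1$, and the moduli of the off-diagonal entries are prescribed by the distances, since $|\langle c_{k-1},c_{k+1}\rangle|=\cosh(m_k/2)=r_k$. The governing principle I would invoke is that a configuration of three complex geodesics is determined up to a holomorphic isometry precisely by the $U(2,1)$-orbit of the triple $(c_1,c_2,c_3)$: two bases of $\c^{2,1}$ with the same Gram matrix differ by a unique element of $U(2,1)$, and conversely $U(2,1)$ preserves all Hermitian products. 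So the first task is to pin down $G$ as a complete invariant.

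Since a polar vector is only defined up to a unit complex scalar, replacing $c_k$ by $e^{i\theta_k}c_k$ multiplies $\langle c_i,c_j\rangle$ by $e^{i(\theta_i-\theta_j)}$ but leaves the triple product $\prod_k\langle c_{k-1},c_{k+1}\rangle$, and hence $\alpha$, unchanged. I would use this gauge freedom to normalise two of the off-diagonal entries to be real and positive, say $\langle c_1,c_2\rangle=r_3$ and $\langle c_2,c_3\rangle=r_1$; the definition of $\alpha$ then forces $\langle c_1,c_3\rangle=r_2e^{i\alpha}$, giving the normal form
\[
G=\begin{pmatrix} 1 & r_3 & r_2 e^{i\alpha} \\ r_3 & 1 & r_1 \\ r_2 e^{-i\alpha} & r_1 & 1 \end{pmatrix}.
\]
This shows that $(r_1,r_2,r_3,\alpha)$ determines $G$ up to the phase gauge, and therefore determines the triangle up to isometry, establishing the uniqueness assertion.

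For existence, the triangle is realised exactly when $G$ arises as the Gram matrix of a basis of $(\c^{2,1},\langle\cdot,\cdot\rangle)$, which happens if and only if $G$ is a nondegenerate Hermitian matrix whose signature $(2,1)$ matches that of the form. I would compute
\[
\det G=1-r_1^2-r_2^2-r_3^2+2r_1r_2r_3\cos(\alpha),
\]
and read off the signature from the sign of $\det G$. Because $\langle c_1,c_1\rangle=1>0$ the form is positive on at least one direction, so $G$ always has at least one positive eigenvalue and can never be negative definite; consequently $\det G<0$ forces exactly one negative eigenvalue, i.e. signature $(2,1)$, whereas $\det G>0$ would force signature $(3,0)$, which cannot embed in a space of signature $(2,1)$. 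Hence a (nondegenerate) triangle exists if and only if $\det G<0$, which rearranges to $\cos(\alpha)<(r_1^2+r_2^2+r_3^2-1)/(2r_1r_2r_3)$, as claimed.

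The evaluation of $\det G$ is routine; the steps demanding the most care are the bookkeeping of the phase gauge, namely verifying that $\alpha$ is the unique remaining invariant and that the displayed normal form is legitimate, together with the signature argument, where one must confirm that $(2,1)$ is the only realisable nondegenerate signature and correctly treat the borderline cases $r_k=1$ (that is $m_k=0$), in which two geodesics meet on $\bchp$.
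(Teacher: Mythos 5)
The first thing to note is that the paper does not prove this proposition at all: it is quoted verbatim from the reference \cite{Pra}, so there is no internal proof to compare against. Your Gram-matrix strategy --- gauge-normalising the polar vectors so that two of the products $\langle c_i,c_j\rangle$ are real positive, observing that the triple product (hence $\al$) is gauge-invariant, computing $\det G=1-r_1^2-r_2^2-r_3^2+2r_1r_2r_3\cos(\al)$, and characterising existence by the signature of $G$ --- is the standard route and is essentially the argument of the cited source. The uniqueness part (two bases with equal Gram matrices differ by an element of $\mathrm{U}(2,1)$) and the determinant computation are correct.

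There is, however, one false step in your signature argument: $\det G>0$ does \emph{not} force signature $(3,0)$. Take $r_1=r_2=r_3=t>1$ and $\al=0$, so that $G$ has unit diagonal and all off-diagonal entries equal to $t$; then $\det G=2t^3-3t^2+1=(t-1)^2(2t+1)>0$, while the eigenvalues of $G$ are $1+2t$ (once) and $1-t$ (twice), i.e.\ the signature is $(1,2)$, not $(3,0)$. Your overall conclusion survives because this step is not actually needed: the Gram matrix of \emph{any} basis of $\c^{2,1}$ is congruent to the matrix of the ambient form and so has signature exactly $(2,1)$, hence negative determinant, while three linearly dependent vectors have singular Gram matrix; therefore ``triangle with independent polar vectors exists $\Rightarrow\det G<0$'' holds directly, and the converse uses only your correct observation that $\det G<0$ together with $G_{11}=1>0$ (which rules out negative definiteness) forces signature $(2,1)$. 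I would also make explicit what your parenthetical ``(nondegenerate)'' is quietly doing in the ``only if'' direction: three distinct vertical chains through a common point of $\bchp$ form a pairwise ultra-parallel triple with $r_1=r_2=r_3=1$ and $\al=0$, for which $\det G=0$ and the strict inequality fails; so the proposition, as stated, implicitly excludes configurations whose polar vectors are linearly dependent, and your proof should say so rather than leave it as a flagged ``borderline case''.
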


\noindent
For $m_3=0$ we have $r_3=1$ and the right hand side of the inequality in Proposition~\ref{traingle-existence} is 
\[\frac{r_1^2+r_2^2}{2r_1r_2}\ge1,\]
so the condition on~$\al$ is always satisfied,
i.e.\ for any $\al\in[0, 2\pi]$ there exists an $[m_1, m_2, m_3]$-triangle with angular invariant~$\al$.

\subsection{Heisenberg group:}
\label{heisenberg-group}
The boundary of the complex hyperbolic space can be identified with the {\defit Heisenberg space\/} 
\[\calN=\c\times\r\cup\{\infty\}=\{(\zeta,\nu)\st\zeta\in\c,\nu\in\r\}\cup\{\infty\}.\]
One homeomorphism taking $\bchp$ to $\calN$ is given by the stereographic projection:
\[
  [z_1:z_2:z_3]\mapsto\left(\frac{z_1}{z_2+z_3}, \Im\left(\frac{z_2-z_3}{z_2+z_3}\right)\right)~\text{if}~z_2+z_3\ne0,
  \quad
  [0:z:-z]\mapsto\infty.
\]
The {\defit Heisenberg group\/} is the Heisenberg space~$\calN$ with the group law
\[(\xi_1,\nu_1)*(\xi_2,\nu_2)=(\xi_1+\xi_2,\nu_1+\nu_2+2\Im(\xi_1\bar{\xi_2})).\]
The centre of~$\calN$ consists of elements of the form~$(0,\nu)$ for~$\nu\in\r$.
The Heisenberg group is not abelian but is $2$-step nilpotent.
To see this, observe that 
\[[(\xi_1,\nu_1),(\xi_2,\nu_2)]=(\xi_1,\nu_1)^{-1}*(\xi_2,\nu_2)^{-1}*(\xi_1,\nu_1)*(\xi_2,\nu_2)=(0,4\Im(\xi_1\bar{\xi_2})).\]
Therefore the commutator of any two elements of~$\calN$ lies in the centre.

An alternative description of the Heisenberg group~$\calN$ is as the group of upper triangular matrices
\[\left\{\begin{pmatrix} 1&x&y\\ 0&1&z\\ 0&0&1\end{pmatrix}\st x,y,z\in\r\right\}\]
with the operation of matrix multiplication.
For any integer~$k\ne0$, the subgroup $N_k$ generated by the matrices
\[a=\begin{pmatrix} 1&0&0\\ 0&1&1\\ 0&0&1\end{pmatrix},\quad b=\begin{pmatrix} 1&1&0\\ 0&1&0\\ 0&0&1\end{pmatrix}\quad\text{and}\quad c=\begin{pmatrix} 1&0&\frac{1}{k}\\ 0&1&0\\ 0&0&1\end{pmatrix}\]
is a uniform lattice in~$\calN$ with the presentation
\[N_k=\<a,b,c\st [b,a]=c^k,~[c,a]=[c,b]=1\>.\] 
Moreover, any uniform lattice in~$\calN$ is isomorphic to~$N_k$ for some integer~$k\ne0$, see section~6.1 in~\cite{De}.

\subsection{Chains:}
A complex geodesic in~$\chp$ is homeomorphic to a disc,
its intersection with the boundary of the complex hyperbolic plane is homeomorphic to a circle.
Circles that arise as the boundaries of complex geodesics are called {\defit chains\/}.

\bigskip
There is a bijection between chains and complex geodesics. We can therefore, without loss of generality, talk about reflections in chains instead of reflections in complex geodesics. 

\bigskip
Chains can be represented in the Heisenberg space, for more details see \cite{Go}.
Chains passing through~$\infty$ are represented by vertical straight lines defined by $\zeta = \zeta_0$.
Such chains are called {\defit vertical\/}.
The vertical chain $C_{\zeta_0}$ defined by $\zeta=\zeta_0$ has a polar vector
\[c_{\zeta_0}=\begin{bmatrix}1\\ -\bar{\zeta_0}\\ \bar{\zeta_0}\end{bmatrix}.\]
A chain not containing~$\infty$ is called {\defit finite\/}.
A finite chain is represented by an ellipse whose vertical projection $\c\times\r\rightarrow\c$ is a circle in~$\c$.
The finite chain with centre $(\zeta_0,\nu_0)\in\calN$ and radius $r_0 > 0$ has a polar vector
\[\begin{bmatrix}2\zeta_0 \\ 1+r_0^2-\zeta_0\bar{\zeta_0}+i\nu_0 \\ 1-r_0^2+\zeta_0\bar{\zeta_0}-i\nu_0  \end{bmatrix}\]
and consists of all points~$(\zeta,\nu)\in\calN$ satisfying the equations
\[|\zeta-\zeta_0|=r_0,\quad\nu=\nu_0-2\Im(\zeta\bar{\zeta}_0).\]

\subsection{Heisenberg isometries:}
\label{heisenberg-isometries}
We consider the space~$\calN$ equipped with the {\defit Cygan metric\/},
\[
  \rho_0\left((\zeta_1, \nu_2), (\zeta_2, \nu_2)\right)
  = \Big|\abs{\zeta_1-\zeta_2}^2-i(\nu_1-\nu_2)-2i\Im(\zeta_1\bar{\zeta_2})\Big|^{1/2}.
\]

\bigskip\noindent
A {\defit Heisenberg translation\/}~$T_{(\xi,\nu)}$ by $(\xi,\nu)\in\calN$ is given by
\[(\zeta,\omega)\mapsto(\zeta+\xi,\omega+\nu+2\Im(\xi\bar{\zeta}))=(\xi,\nu)*(\zeta,\omega)\]
and corresponds to the following element in $\PU(2,1)$
\[
  \begin{pmatrix}
    1 & \xi & \xi \\
    -\bar{\xi} & 1-\frac{|\xi|^2-i\nu}{2} & -\frac{|\xi|^2-i\nu}{2}\\ 
    \bar{\xi} & \frac{|\xi|^2-i\nu}{2} & 1+\frac{|\xi|^2-i\nu}{2} 
  \end{pmatrix}.
\]
A special case is a vertical Heisenberg translation~$T_{(0,\nu)}$ by $(0,\nu)\in\calN$ given by
\[(\zeta,\omega)\mapsto(\zeta,\omega+\nu).\]
A {\defit Heisenberg rotation\/}~$R_{\mu}$ by $\mu\in\c$, $|\mu|=1$ is given by
\[(\zeta,\omega)\mapsto(\mu\cdot\zeta,\omega)\]
and corresponds to the following element in $\PU(2,1)$
\[
  \begin{pmatrix}
    \mu & 0 & 0 \\
    0 & 1 & 0\\ 
    0 & 0 & 1
  \end{pmatrix}.
\]
A minimal complex reflection~$\iota_{C_{\varphi}}$ of order~$n$ in a vertical chain~$C_{\varphi}$ with polar vector 
\[c_{\varphi}=\begin{bmatrix}1\\ -\bar{\varphi}\\ \bar{\varphi}\end{bmatrix}\]
 is given by
\[
  (\zeta,\omega)
  \mapsto
  (\mu\zeta+(1-\mu)\varphi,\omega-2|\varphi|^2\Im(1-\mu)+2\Im((1-\mu)\bar{\varphi}\zeta))
\]
and corresponds to the following element in~$\PU(2,1)$
\[
  \begin{pmatrix}
    -\mu & -(1-\mu)\varphi & -(1-\mu)\varphi\\ 
  -(1-\mu)\bar{\varphi} & (1-\mu)|\varphi|^2-1 & (1-\mu)|\varphi|^2\\ 
  (1-\mu)\bar{\varphi} & -(1-\mu)|\varphi|^2 & -(1-\mu)|\varphi|^2-1
  \end{pmatrix},
\] 
where $\mu=\exp(2\pi i/n)$.
The complex reflection $\iota_{C_{\varphi}}$ can be decomposed as a product of a Heisenberg translation and a Heisenberg rotation:
\[\iota_{C_{\varphi}}=R_{\mu}\circ T_{(\xi,\nu)}=T_{(\mu\xi,\nu)}\circ R_{\mu},\]
where 
\[
  \xi=(\bar{\mu}-1)\varphi
  \quad\text{and}\quad
  \nu=-2|\varphi|^2\cdot\Im(1-\mu)=2|\varphi|^2\sin(2\pi/n).
\]
Heisenberg translations, Heisenberg rotations and complex reflections are isometries with respect to the Cygan metric.
The group of all Heisenberg translations is isomorphic to~$\calN$.
The group of all Heisenberg rotations $\{R_{\mu}\st\mu\in\c,~|\mu|=1\}$ is isomorphic to~$\U(1)$.
The group of their products $\calN\semiprod\U(1)$ contains all complex reflections.

\subsection{Products of reflections in chains:}
What effect does the minimal complex reflection of order~$n$ in the vertical chain~$C_\zeta$ have on another vertical chain, $C_\xi$, which intersects $\c\times\{0\}$ at~$\xi$?

\bigskip
We calculate 
\begin{align*}
  \begin{pmatrix}
    -\mu & -(1-\mu)\zeta & -(1-\mu)\zeta \\
    -(1-\mu)\bar{\zeta} & (1-\mu)|\zeta|^2-1 & (1-\mu)|\zeta|^2 \\
    (1-\mu)\bar{\zeta} & -(1-\mu)|\zeta|^2 & -(1-\mu)|\zeta|^2-1
  \end{pmatrix}
  \begin{bmatrix}1\\ -\bar{\xi}\\ \bar{\xi}\end{bmatrix}
  =
  \begin{bmatrix} -\mu\\ -(1-\mu)\bar{\zeta}+\bar{\xi}\\ (1-\mu)\bar{\zeta} -\bar{\xi}\end{bmatrix}.
\end{align*}
This vector is a multiple of
\[
  \begin{bmatrix} 
    1\\ 
    (1-\mu)\bar{\mu}\bar{\zeta}-\bar{\mu}\bar{\xi}\\
    -(1-\mu)\bar{\mu}\bar{\zeta} +\bar{\mu}\bar{\xi}
\end{bmatrix}
=\begin{bmatrix} 1 \\ -\overline{\left(\mu\xi-(\mu-1)\zeta\right)} \\ \overline{\left(\mu\xi-(\mu-1)\zeta\right)} \end{bmatrix}
\]
which is the polar vector of the vertical chain that intersects $\c\times\{0\}$ at $\mu\xi-(\mu-1)\zeta$. 
This corresponds to rotating $\xi$ around $\zeta$ through $\frac{2\pi}{n}$.
So if we have a vertical chain $C_{\xi}$, the minimal complex reflection of order~$n$ in another vertical chain~$C_{\zeta}$ rotates $C_{\xi}$ as a set around $C_{\zeta}$ through $\frac{2\pi}{n}$ (but not point-wise).

\subsection{Bisectors and spinal spheres:}
Unlike in the real hyperbolic space, there are no totally geodesic real hypersurfaces in $\chp$.
An acceptable substitute are the metric bisectors.
Let $z_1, z_2\in\chp$ be two distinct points.
The {\defit bisector equidistant\/} from~$z_1$ and~$z_2$ is defined as
\[\{z\in\chp\st \rho(z_1,z)=\rho(z_2,z)\}.\]
The intersection of a bisector with the boundary of~$\chp$ is a smooth hypersurface in~$\bchp$ called a {\defit spinal sphere\/}, which is diffeomorphic to a sphere. 
An example is the bisector
\[\calC=\{[z:it:1]\in\chp\st |z|^2<1-t^2,~z\in\c,~t\in\r\}.\]
Its boundary, the {\defit unit spinal sphere\/}, can be described as
\[U=\{(\zeta,\nu)\in\calN\st |\zeta|^4+\nu^2=1\}.\]

\section{Parametrisation of complex hyperbolic triangle groups of type $[m_1, m_2, 0;n_1,n_2,n_3]$}

\label{sec-param}

For $r_1, r_2 \ge1$ and $\al\in(0,2\pi)$, let $C_1$, $C_2$ and $C_3$ be the complex geodesics with respective polar vectors
\[
  c_1 = \begin{bmatrix}1 \\ -r_2e^{-i\theta} \\ r_2e^{-i\theta} \end{bmatrix},\quad
  c_2 = \begin{bmatrix}1 \\ r_1e^{i\theta} \\ -r_1e^{i\theta} \end{bmatrix}
  \quad\mbox{and}\quad
  c_3 = \begin{bmatrix}0 \\ 1 \\ 0 \end{bmatrix},
\]
where $\theta=(\pi-\al)/2\in(-\pi/2,\pi/2)$.
The type of triangle formed by $C_1,C_2,C_3$ is an ultra-parallel $[m_1, m_2,0]$-triangle with angular invariant~$\al$,
where $r_k=\cosh(m_k/2)$ for~$k=1,2$.

\bigskip
For $k=1,2,3$, let $\iota_k$ be the minimal complex reflection of order~$n_k$ in the chain~$C_k$.
The group $\<\iota_1,\iota_2,\iota_3\>$ generated by these three complex reflections
is an ultra-parallel complex hyperbolic triangle groups of type $[m_1, m_2, 0;n_1,n_2,n_3]$.
Looking at the arrangement of the chains $C_1$, $C_2$ and $C_3$ in the Heisenberg space $\calN$,
the finite chain~$C_3$ is the (Euclidean) unit circle in $\c\times\{0\}$,
whereas $C_1$ and~$C_2$ are vertical lines
through the points $\varphi_1 = r_2e^{i\theta}$ and $\varphi_2=-r_1e^{-i\theta}$ respectively, see Figure~\ref{fig-chains}.
For~$k=1,2$, the reflection~$\iota_k$ rotates any vertical chain as a set  through $\frac{2\pi}{n_k}$ around~$C_k$.

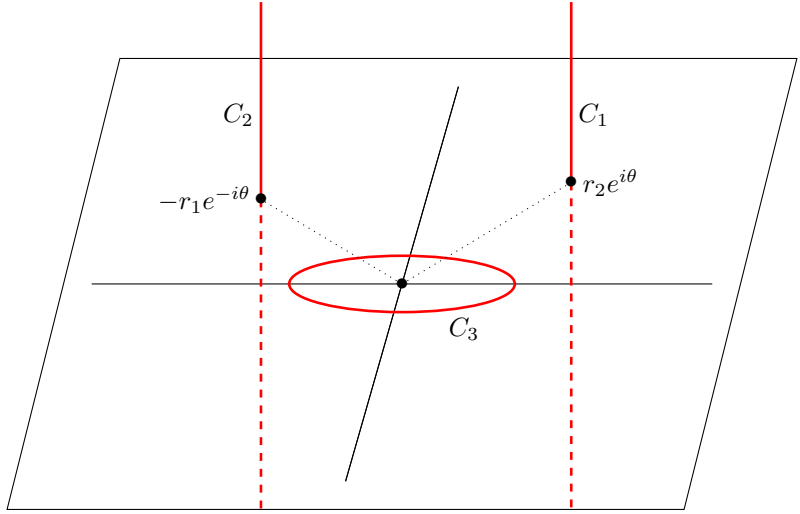
\begin{figure}[h]
\begin{center}
\begin{tikzpicture}[scale=0.75]
  \path[clip] (-8,-5)--(-8,5)--(8,5)--(8,-5)--(-8,-5)--cycle;
  \draw (-7,-4)--(5,-4)--(7,4)--(-5,4)--(-7,-4)--cycle;
  \draw (-5.5,0)--(5.5,0);
  \draw (-1,-3.5)--(1,3.5);
  \draw (-1,-3.5)--(1,3.5);
  \draw[line width=.35mm, red] (-2.5,1.5)--(-2.5,5);
  \draw[line width=.35mm, red] (3,1.8)--(3,5);
  \draw[line width=.35mm, red,dashed] (-2.5,1.5)--(-2.5,-4);
  \draw[line width=.35mm, red,dashed] (3,1.8)--(3,-4);
  \draw[line width=.35mm, red] (0,0) ellipse (2cm and 0.5cm);
  \path[draw,dotted] (0,0)--(-2.5,1.5);
  \path[draw,dotted] (0,0)--(3,1.8);
  \foreach \Point in {(0,0), (-2.5,1.5), (3,1.8)}{\node at \Point {$\bullet$};}
  \node at (1.1,-0.8) {$C_3$};
  \node at (-2.9,3) {$C_2$};
  \node at (3.4,3) {$C_1$};
  \node at (3.7,1.8) {$r_2 e^{i\theta}$};
  \node at (-3.5,1.5) {$-r_1 e^{-i\theta}$};
\end{tikzpicture}
\end{center}
\caption{Chains $C_1$, $C_2$ and $C_3$ (figure from~\cite{MPP}).}
\label{fig-chains}
\end{figure}



\section{Compression property}

\label{sec-compression}

Let $C_1,C_2,C_3$ be chains in~$\calN$ as in the previous section.
Let $\iota_k$ be the minimal complex reflection of order~$n_k$ in the chain~$C_k$ for $k=1,2,3$.
We will assume that $n_3=2$.
To prove the discreteness of the group $\<\iota_1,\iota_2,\iota_3\>$ we will use the following version of Klein's combination theorem discussed in \cite{WG}:

\begin{proposition}

\label{criterion}

If there exist subsets $U_1$, $U_2$ and~$V$ in~$\calN$ with $U_1\cap U_2=\varnothing$ and $V\subsetneq U_1$ such that 
$\iota_3(U_1) = U_2$ and  $g(U_2)\subsetneq V$ for all~$g\ne\Id$ in $\<\iota_1,\iota_2\>$,
then the group $\<\iota_1,\iota_2,\iota_3\>$ is a discrete subgroup of $\PU(2,1)$.
Groups with such properties are called {\defit compressing\/}.
\end{proposition}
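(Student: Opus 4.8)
The plan is to prove discreteness by a ping-pong (compression) argument, extracting from conditions (1) and (2) a \emph{uniform} lower bound on the displacement of two fixed points under every non-identity element of $G=\langle\iota_1,\iota_2,\iota_3\rangle$. Write $H=\langle\iota_1,\iota_2\rangle$. Since $n_3=2$, the reflection $\iota_3$ is an involution, so condition (1) gives not only $\iota_3(U_1)=U_2$ but also $\iota_3(U_2)=U_1$. First I would record what condition (2) says about $H$ alone: every $h\in H\setminus\{\Id\}$ sends $U_2$ into $V\subsetneq U_1$, which is disjoint from $U_2$, so $U_2$ is a wandering set for $H$ and $H$ is already discrete. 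The real task is to incorporate $\iota_3$.

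Next I would use the normal form in $G$. Because $\iota_3^2=\Id$, every $g\in G$ has a reduced expression as an alternating word in $\iota_3$ and non-trivial elements of $H$. The crucial bookkeeping is that condition (2) controls the action of $H$ \emph{only} on $U_2$, so I track images through configurations in which each $H$-letter is applied to a subset of $U_2$ and each $\iota_3$ to a subset of $U_1$. By induction on word length one checks: if the right-most letter of $g$ lies in $H$, then applying $g$ to $U_2$ never leaves this controlled regime, and $g(U_2)\subset V\subset U_1$ when the left-most letter lies in $H$, whereas $g(U_2)\subset\iota_3(V)\subset U_2$ when the left-most letter is $\iota_3$. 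Symmetrically, using $\iota_3(U_1)=U_2$ to absorb the right-most $\iota_3$, if the right-most letter of $g$ is $\iota_3$ then $g(U_1)\subset V\subset U_1$ or $g(U_1)\subset U_2$ according to whether the left-most letter lies in $H$ or is $\iota_3$.

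With this in hand, assume the mild separation (present in the application): $\overline V\subset U_1$ and the regions $U_1,U_2$ have disjoint closures. Fix once and for all a point $x^\ast\in U_2$ at positive Cygan distance from both $\overline{U_1}$ and $\iota_3(\overline V)$, and a point $y^\ast\in U_1$ at positive distance from both $\overline V$ and $\overline{U_2}$; let $\delta>0$ be the minimum of these finitely many distances. For any $g\neq\Id$: if the right-most letter of $g$ lies in $H$, the previous paragraph gives $g(x^\ast)\in V\subset U_1$ or $g(x^\ast)\in\iota_3(V)$, so $\rho_0(g(x^\ast),x^\ast)\ge\delta$; if the right-most letter is $\iota_3$, then $g(y^\ast)\in V$ or $g(y^\ast)\in U_2$, so $\rho_0(g(y^\ast),y^\ast)\ge\delta$. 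In either case $g$ moves $x^\ast$ or $y^\ast$ by at least $\delta$. Hence if $g_n\in G$ satisfy $g_n\to\Id$, then $g_n(x^\ast)\to x^\ast$ and $g_n(y^\ast)\to y^\ast$, forcing $g_n=\Id$ for large $n$; so $\Id$ is isolated in $G$ and $G$ is discrete.

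The main obstacle is precisely the asymmetry flagged above: hypothesis (2) says nothing about the action of $H$ on $U_1$ — indeed $h(U_1)\supset U_2$ can be large — so naively following the orbit of a single set breaks down for words ending in $\iota_3$. The device that rescues the argument is the involution identity $\iota_3(U_1)=U_2$, which lets one test such words on $U_1$ instead, keeping every $H$-letter applied to a subset of $U_2$; this is why two test points, one in each set, are needed rather than one. The only remaining technical point is upgrading the proper inclusions $U_1\cap U_2=\varnothing$ and $V\subsetneq U_1$ to genuine separation ($\overline V\subset U_1$ together with positive distance between the regions), so that $\delta$ is uniform over all of $G$; this holds automatically in the intended setting, where $U_1,U_2$ are regions bounded by disjoint spinal spheres and $\overline V$ lies in the interior of $U_1$.
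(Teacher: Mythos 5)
The paper never proves this proposition itself: it is quoted from Wyss-Gallifent's thesis \cite{WyssGall}, so there is no internal proof to compare against. Your argument is, in substance, the standard compression (ping-pong) proof, and its core is correct: the alternating normal form in non-trivial elements of $H=\langle\iota_1,\iota_2\rangle$ and $\iota_3$; the observation that condition (2) controls $H$ only on $U_2$; the device of testing words whose right-most letter lies in $H$ on $U_2$, but words ending in $\iota_3$ on $U_1$ (absorbing that letter via $\iota_3(U_1)=U_2$), so that every $H$-letter is always applied inside $U_2$; and finally two fixed test points with a uniform displacement bound $\delta$, which makes the identity isolated and hence the group discrete. You are also right that some topological input must be added to the literal statement: with arbitrary subsets the compression conditions give no way to contradict $g_n\to\Id$, so one needs $U_1,U_2$ open (as they are in the application) and enough room to place the test points.

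One factual slip, though it does not break the proof: your ``mild separation'' hypotheses are misdescribed as holding automatically in the intended setting. In the paper's application $U_1$ and $U_2$ are the outside and the inside of the \emph{same} unit spinal sphere, so their closures are not disjoint (they share that sphere); moreover $\overline{V}\subset U_1$ can fail, because the paper's sufficient conditions are the non-strict inequalities $|f(0)|\ge2$, $|h(0)|\ge2$, which allow images of $U_2$ to touch the sphere. Fortunately your argument never actually uses these stronger hypotheses. All it needs is that $U_1,U_2$ are open and disjoint and that $U_1\not\subset\overline{V}$: then any fixed $x^\ast\in U_2\setminus\iota_3(\overline{V})$ lies outside the closed sets $\overline{U_1}$ and $\iota_3(\overline{V})$ (openness and disjointness give $U_2\cap\overline{U_1}=\varnothing$), hence at positive Cygan distance from both, and similarly for $y^\ast\in U_1\setminus\overline{V}$; your uniform $\delta$ then exists and the rest of the proof goes through verbatim. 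With the hypotheses stated in that weaker form --- which do hold in the paper's application --- your proof is complete and is essentially the argument the paper outsources to \cite{WyssGall}.
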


\noindent
Projecting the actions of complex reflections~$\iota_1$ and~$\iota_2$ to~$\c\times\{0\}$ we obtain
rotations~$j_1$ and $j_2$ of~$\c$ around $\varphi_1=r_2e^{i\theta}$ and $\varphi_2=-r_1e^{-i\theta}$
through $\frac{2\pi}{n_1}$ and $\frac{2\pi}{n_2}$ respectively.
We will use Proposition~\ref{criterion} to prove the following Lemma:


\begin{lemma}
\label{f(0)}
If $|f(0)|\ge2$ for all $f\ne\Id$ in $\<j_1,j_2\>$ and $|h(0)|\ge2$ for all vertical Heisenberg translations $h\ne\Id$ in $\<\iota_1,\iota_2\>$,
then the group $\<\iota_1,\iota_2,\iota_3\>$ is discrete.
\end{lemma}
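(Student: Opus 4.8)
The plan is to apply the compression criterion of Proposition~\ref{criterion} with explicit sets built from the unit spinal sphere $U=\{(\zeta,\nu)\in\calN\st|\zeta|^4+\nu^2=1\}$. Writing $N(\zeta,\nu)=(|\zeta|^4+\nu^2)^{1/4}$ for the Cygan norm (so that $U$ is the Cygan sphere of radius~$1$ about the origin), I would take the interior $U_2=\{N<1\}$ and the exterior $U_1=\{N>1\}\cup\{\infty\}$; these are disjoint, so $U_1\cap U_2=\varnothing$. The key elementary observation is that $U_2$ is the open Cygan ball of radius~$1$ about the origin, so its vertical projection $\pi(\zeta,\nu)=\zeta$ is exactly the open unit disc in~$\c$.

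To verify property~(1), I would compute the action of $\iota_3$ directly. Since $n_3=2$ we have $\mu=-1$, and the reflection formula gives the projective map $[z_1:z_2:z_3]\mapsto[-z_1:z_2:-z_3]$; passing to Heisenberg coordinates through the stereographic projection yields
\[\iota_3(\zeta,\nu)=\left(\frac{\zeta}{|\zeta|^2-i\nu},\ \frac{-\nu}{|\zeta|^4+\nu^2}\right).\]
A short computation then gives the identity $N(\iota_3(\zeta,\nu))=1/N(\zeta,\nu)$, which shows that $\iota_3$ is the Cygan inversion in~$U$: it fixes~$U$ and interchanges its interior and exterior. In particular $\iota_3(U_1)=U_2$, which is property~(1).

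For property~(2) I would set $V=\{(\zeta,\nu)\st|\zeta|>1\}\cup\{(\zeta,\nu)\st|\nu|>1\}$. Each of $|\zeta|>1$ and $|\nu|>1$ forces $|\zeta|^4+\nu^2>1$, so $V\subseteq U_1$, and the inclusion is proper since $U_1$ contains points with $|\zeta|,|\nu|<1$. Now fix $g\ne\Id$ in $\<\iota_1,\iota_2\>$; as $g$ is a Heisenberg isometry, projection~$\pi$ intertwines it with the planar isometry $f\in\<j_1,j_2\>$. If $f\ne\Id$, then $f$ sends the unit disc $\pi(U_2)$ to the radius-$1$ disc about $f(0)$, and the hypothesis $|f(0)|\ge2$ makes every point of that image have modulus $>1$; hence $g(U_2)\subseteq\{|\zeta|>1\}\subseteq V$. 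If instead $f=\Id$, then $g$ has trivial planar part and is therefore a nontrivial vertical translation $h$, so $h(\zeta,\nu)=(\zeta,\nu+\nu_0)$ with $\nu_0\ne0$, and $|h(0)|=|\nu_0|^{1/2}\ge2$ forces $|\nu_0|\ge4$; since $|\nu|<1$ on $U_2$ this gives $g(U_2)\subseteq\{|\nu|>1\}\subseteq V$. In both cases $g(U_2)$ is a bounded Cygan ball while $V$ is unbounded, so $g(U_2)\subsetneq V$, which is property~(2). The conclusion then follows from Proposition~\ref{criterion}.

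The step I expect to be the main obstacle is twofold. The computational heart is property~(1): one must recognise $\iota_3$ as the Cygan inversion by establishing $N(\iota_3(\zeta,\nu))=1/N(\zeta,\nu)$. The conceptual heart is property~(2), where one must see that $\<\iota_1,\iota_2\>$ splits into precisely the two families addressed by the two hypotheses of the lemma --- elements with nontrivial planar projection and nontrivial vertical translations --- and then choose a single region~$V$, proper in~$U_1$, that simultaneously absorbs the images coming from both families.
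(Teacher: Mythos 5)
Your proof is correct, and it follows the same overall strategy as the paper: the compression criterion of Proposition~\ref{criterion} applied to the inside and outside of the unit spinal sphere, with the elements of $\<\iota_1,\iota_2\>$ split into those whose planar projection is nontrivial (handled by the hypothesis on $|f(0)|$) and nontrivial vertical translations (handled by the hypothesis on $|h(0)|$). It differs in two implementation choices, both worth recording. For property~(1), the paper argues conceptually: $\iota_3$ preserves the bisector $\calC$, hence its boundary $U$, and interchanges the points $(0,0)$ and $\infty$, so it must swap inside and outside; your identity $N(\iota_3(\zeta,\nu))=1/N(\zeta,\nu)$ is a correct, self-contained computational substitute (your Heisenberg formula for $\iota_3$ checks out), though you should say explicitly that $\iota_3(\infty)=(0,0)$, since the formula does not literally apply at $\infty$. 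For property~(2), the paper takes $V$ to be the union of the images $g(U_2)$ over the \emph{non-vertical} elements $g$ of $\<\iota_1,\iota_2\>$, using the cylinder $W_2=\{|\zeta|<1\}$ to control those images; your explicit region $V=\{|\zeta|>1\}\cup\{|\nu|>1\}$ is genuinely different and in fact cleaner. Indeed, as literally written the paper's $V$ does not contain the images $h(U_2)$ for nontrivial vertical translations $h$, which property~(2) also demands: such an image contains a point with $\zeta=0$, while every non-vertical image lies in $\{|\zeta|>1\}$, so the union needs to be taken over \emph{all} $g\ne\Id$. Your single region absorbs both families at once and sidesteps this entirely.

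One small discrepancy of interpretation: you read $|h(0)|$ as the Cygan norm of $h(0)=(0,\nu_0)$, so that $|h(0)|\ge2$ means $|\nu_0|\ge4$, whereas the paper means the vertical displacement itself, i.e. $|\nu_0|\ge2$ (this is how the hypothesis is verified in the proof of Proposition~\ref{prop1}, where the displacement $24\sqrt{3}r^2\cos^2(\theta)$ is compared with the height $2$ of the spinal sphere). This is harmless for your argument, since $|\nu_0|\ge2$ already gives $|\nu+\nu_0|>2-1=1$ on $U_2$, hence $h(U_2)\subseteq\{|\nu|>1\}$ exactly as you argue; just make sure the reading you adopt in the lemma matches the one used when it is applied.
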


\begin{proof}
Consider the unit spinal sphere 
\[U=\{(\zeta, \nu)\in\calN\st|\zeta|^4+\nu^2=1\}.\]
The complex reflection~$\iota_3$ in~$C_3$ is given by
\[\iota_3([z_1:z_2:z_3])=[-z_1:z_2:-z_3]=[z_1:-z_2:z_3].\]
The complex reflection~$\iota_3$ preserves the bisector
\[\calC=\{[z:it:1]\in\chp\st |z|^2<1-t^2, z\in\c, t\in\r\}\]
and hence preserves the unit spinal sphere~$U$ which is the boundary of the bisector~$\calC$.
The complex reflection~$\iota_3$ interchanges the points $[0:1:1]$ and $[0:-1:1]$ in~$\chp$,
which correspond to the points $(0,0)$ and $\infty$ in $\calN$.
Therefore, $\iota_3$ leaves $U$ invariant and switches the inside of $U$ with the outside.

\bigskip
Let $U_1$ be the part of $\calN\backslash U$ outside~$U$, containing~$\infty$,
and let $U_2$ be the part inside~$U$, containing the origin.
Clearly
\[U_1\cap U_2=\varnothing\quad\text{and}\quad \iota_3(U_1)=U_2.\]
Therefore, if we find a subset $V\subsetneq U_1$ such that $g(U_2)\subsetneq V$
for all elements $g\ne\Id$ in $\<\iota_1,\iota_2\>$,
then we will show that $\<\iota_1,\iota_2,\iota_3\>$ is discrete.
Let
\[W=\{(\zeta,\nu)\in\calN\st|\zeta|=1\}\]
be the cylinder consisting of all vertical chains through $\zeta\in\c$ with $|\zeta|=1$.
Let
\[
  W_1=\{(\zeta,\nu)\in\calN\st|\zeta|>1\}
  \quad\text{and}\quad
  W_2=\{(\zeta,\nu)\in\calN\st|\zeta|<1\}
\]
be the parts of $\calN\backslash W$ outside and inside the cylinder~$W$ respectively. 
We have $U_2\subset W_2$ and so $g(U_2)\subset g(W_2)$ for all~$g\in\<\iota_1,\iota_2\>$.
The set $W_2$ is a union of vertical chains.
We know that elements of $\<\iota_1,\iota_2\>$ map vertical chains to vertical chains.
There is also a vertical translation on the chain itself.
Therefore, we look at both the intersection of the images of~$W_2$ with $\c\times\{0\}$ and the vertical displacement of~$W_2$. 

\bigskip
Elements of $\<\iota_1,\iota_2\>$ move the intersection of $W_2$ with $\c\times\{0\}$ by rotations $j_1$ and $j_2$
around \(r_2e^{i\theta}\) and \(-r_1e^{-i\theta}\) through $\frac{2\pi}{n_1}$ and $\frac{2\pi}{n_2}$ respectively.
Provided that the interior of the unit circle is mapped completely off itself under all non-identity elements in $\<j_1,j_2\>$,
then the same is true for $W_2$ and hence for $U_2$ under all elements in $\<\iota_1,\iota_2\>$ that are not vertical Heisenberg translations.

\bigskip
A vertical Heisenberg translation will shift~$W_2$ and its images $g(W_2)$ vertically by the same distance,
hence the same is true for $U_2$ and its images $g(U_2)$.

\bigskip
We choose $V$ to be the union of all the images of $U_2$ under all non-vertical elements of $\<\iota_1,\iota_2\>$.
This subset will satisfy the compressing conditions
assuming that the interior of the unit circle is mapped off itself by any non-identity element in $\<j_1,j_2\>$
and that the interior of the unit spinal sphere $U$ is mapped off itself
by any non-identity vertical Heisenberg translation in $\<\iota_1,\iota_2\>$.
Since the radius of the unit circle is preserved under rotations,
we need to show that the origin is moved the distance of at least twice the radius of the circle:
\[|f(0)|\ge2\quad\text{for all}~f\in\<j_1,j_2\>,~f\ne\Id.\]
Since vertical translations shift the spinal spheres vertically,
we need to show that they shift by at least the height of the spinal sphere:
\[|h(0)|\ge2\quad\text{for all vertical Heisenberg translations}~h\in\<\iota_1,\iota_2\>,~h\ne\Id.\]
We see that the conditions of this Lemma ensure that the sets $U_1$, $U_2$ and~$V$ satisfy the conditions of Proposition~\ref{criterion}.
\end{proof}

\section{Parametrisation of complex hyperbolic triangle groups of type $[m,m,0;3,3,2]$}

\label{sec-param-332}

\noindent
We will now focus on the case of $[m_1,m_2,0;n_1,n_2,n_3]$-groups with
\[m_1=m_2=m,\quad n_1=n_2=3\quad\text{and}\quad n_3=2.\]
In this case the setting described in section~\ref{sec-param} is as follows.
We consider the following configuration of chains in~$\calN$:
$C_3$ is the (Euclidean) unit circle in $\c\times\{0\}$, whereas $C_1$ and~$C_2$ are vertical lines
through the points $\varphi_1 = re^{i\theta}$ and $\varphi_2=-re^{-i\theta}$ respectively,
where $r=\cosh(m/2)$ and $\theta\in(-\pi/2,\pi/2)$.
The type of triangle formed by $C_1,C_2,C_3$ is an ultra-parallel $[m,m,0]$-triangle with angular invariant $\al=\pi-2\theta\in(0,2\pi)$.
We will consider the ultra-parallel triangle group $\Ga=\<\iota_1,\iota_2,\iota_3\>$
generated by the minimal complex reflections $\iota_1,\iota_2,\iota_3$ of orders~$3,3,2$ in the chains~$C_1,C_2,C_3$ respectively.

The description of complex reflections in section~\ref{heisenberg-isometries} in this case is as follows:
The reflection $\iota_k$ for $k=1,2$ is given by 
\[
  (\zeta,\omega)
  \mapsto
  (\mu\zeta+(1-\mu)\varphi_k,\omega+2|\varphi_k|^2\Im(1-\mu)+2\Im((1-\mu)\bar{\varphi_k}\zeta)),
\]
where $\mu=\exp(2\pi i/3)$,
and can be decomposed into a product of a Heisenberg translation and a Heisenberg rotation:
\[\iota_k=R_{\mu}\circ T_{(\xi_k,\nu_k)}=T_{(\mu\xi_k,\nu_k)}\circ R_{\mu},\]
where 
\[
  \xi_k=(\bar{\mu}-1)\varphi_k
  \quad\text{and}\quad
  \nu_k=-2|\varphi_k|^2\cdot\Im(1-\mu)=2|\varphi_k|^2\sin(2\pi/3).
\]
For $k=1,2$, the reflection~$\iota_k$ rotates any vertical chain as a set  through $\frac{2\pi}{3}$ around~$C_k$.

\section{Subgroup of Heisenberg translations}

\label{sec-transl-subgroup}

\noindent
Let $\Ga=\<\iota_1,\iota_2,\iota_3\>$ be as in section~\ref{sec-param-332}.
In this section we will consider the structure of the subgroup~$\EE=\<\iota_1,\iota_2\>$ in more detail.

\begin{proposition}
\label{transl-subgroup}
Let $\calT$ be the subgroup of all Heisenberg translations in $\EE$.
Every element of~$\EE$ can be written as a product of a Heisenberg translation and a power of~$\iota_1$.
The group~$\calT$ is generated by the elements
\[T_1=\iota_2\iota_1\iota_2\quad\text{and}\quad T_2=\iota_1\iota_1\iota_2.\]
Let $H=[T_1,T_2]=(\iota_1\iota_2)^3$.
Every element of~$\calT$ is of the form $T_1^xT_2^yH^n$ for some $x,y,n\in\z$.
The elements $T_1$, $T_2$, $H$ are Heisenberg translations by
\begin{align*}
  (v_1,t_1)&=(2r\sqrt{3}\cos(\theta)\cdot i,12\sqrt{3}r^2\cos^2(\theta)),\\
  (v_2,t_2)&=(r\cos(\theta)\cdot(3+i\sqrt{3}),12r^2\sin(\theta)\cos(\theta)),\\
  (0,\nu)&=(0,24r^2\sqrt{3}\cos^2(\theta))
\end{align*}
respectively.
The subgroup of vertical Heisenberg translations in $\EE$ is an infinite cyclic group generated by~$H$.
The shortest non-trivial vertical translations in $\EE$ are $H$ and~$H^{-1}$.
\end{proposition}

\begin{proof}
We can write every element in $\EE$ as a word in the generators~$\iota_1^{\pm1}$ and~$\iota_2^{\pm1}$.
Using the relations $\iota_1^{-1}=\iota_1^2$ and $\iota_2^{-1}=\iota_2^2$ we can rewrite it as a word in just~$\iota_1$ and~$\iota_2$.
Consider the words $\iota_{k_1k_2k_3}=\iota_{k_1}\iota_{k_2}\iota_{k_3}$ of length~$3$.
Using the decomposition $\iota_k=R_{\mu}\circ T_{(\xi_k,\nu_k)}=T_{(\mu\xi_k,\nu_k)}\circ R_{\mu}$ (section~\ref{sec-param-332}),
we can write
\begin{align*}
  \iota_{k_1k_2k_3}
  &=(R_{\mu}\circ T_{(\xi_{k_1},\nu_{k_1})})\circ(R_{\mu}\circ T_{(\xi_{k_2},\nu_{k_2})})\circ(R_{\mu}\circ T_{(\xi_{k_3},\nu_{k_3})})\\
  &=(R_{\mu})^3\circ T_{(\mu\xi_{k_1},\nu_{k_1})}\circ T_{(\mu^2\xi_{k_2},\nu_{k_2})}\circ T_{(\xi_{k_3},\nu_{k_3})}\\
  &=T_{(\mu\xi_{k_1},\nu_{k_1})}\circ T_{(\mu^2\xi_{k_2},\nu_{k_2})}\circ T_{(\xi_{k_3},\nu_{k_3})},
\end{align*}
hence $\iota_{k_1k_2k_3}$ is a Heisenberg translation.
Let $f\in\EE=\<\iota_1,\iota_2\>$.
We can write~$f$ as a product of some words of length~$3$ and one word of length at most~$2$.
Moreover, using the relations $\iota_2=\iota_{211}\cdot\iota_1$, $\iota_2^2=\iota_{221}\cdot\iota_1^2$, $\iota_1\iota_2=\iota_{121}\cdot\iota_1^2$ and $\iota_2\iota_1=\iota_{211}\cdot\iota_1^2$,
we can rewrite~$f$ as a product of some words of length~$3$ and a power of~$\iota_1$.
Using the relations $\iota_1^3=\iota_2^3=\Id$ we see that all words $\iota_{k_1k_2k_3}$ of length~$3$ can be expressed in terms of $T_1=\iota_{212}$ and $T_2=\iota_{112}$
as $\iota_{221}=T_2^{-1}$, $\iota_{122}=T_2T_1^{-1}$, $\iota_{211}=T_1T_2^{-1}$ and $\iota_{121}=T_2T_1^{-1}T_2^{-1}$.
Hence $f$ can be written as a product of an element in~$\<T_1,T_2\>$ and an element $w\in\<\iota_1\>$,
and $f$ is a Heisenberg translations if and only if~$w=\Id$.
Therefore $\calT=\<T_1,T_2\>$.

Let $H=[T_1,T_2]\in\calT$.
As a commutator of two Heisenberg translations, the element~$H$ is a vertical Heisenberg translation and lies in the centre of~$\calN$, hence $[H,T_1]=[H,T_2]=1$.
Direct computation shows that $T_2HT_2^{-1}=(\iota_1\iota_2)^3$.
On the other hand, $T_2H=HT_2$ implies $T_2HT_2^{-1}=HT_2T_2^{-1}=H$, hence $H=(\iota_1\iota_2)^3$.
Using the relations  $HT_1=T_1H$, $HT_2=T_2H$ and $T_2T_1=T_1T_2H^{-1}$,
every element of~$\calT$ can be written in the form $T_1^xT_2^yH^n$ for some $x,y,n\in\z$.
The elements~$T_1$ and~$T_2$ are Heisenberg translation by $(v_1,t_1)$ and~$(v_2,t_2)$ respectively.
The commutator $H=[T_1,T_2]$ is a vertical Heisenberg translation by $\nu=4\Im(v_1\bar{v}_2)$.
We determine $(v_k,t_k)$ and~$\nu$ by direct computation.
Projection to~$\c$ maps $H$ to the identity, $T_k$ to the Euclidean translation by~$v_k$ and $T_1^xT_2^yH^n$ to the Euclidean translation by $x v_1+y v_2$.
Hence $T_1^xT_2^yH^n$ is a vertical translation if and only if $x=y=0$, i.e.\ if it is a power of~$H$.
Therefore the subgroup of vertical Heisenberg translations in $\EE$ is generated by~$H$.
\end{proof}

\begin{remark}
The group~$\calT$ has the presentation
\[\calT=\<T_1,T_2,H\st [T_1,T_2]=H,~[H,T_1]=[H,T_2]=1\>\]
and is isomorphic to the uniform lattice $N_1$ as defined in section~\ref{heisenberg-group}.
\end{remark}

\begin{remark}
An alternative approach to the understanding of the structure of the subgroup~$\EE=\<\iota_1,\iota_2\>$
is to use the classification of almost-crystallographic groups by Dekimpe~\cite{De}.
An {\defit almost-crystallographic group\/} is a uniform discrete subgroup~$E$ of $G\semiprod C$,
where $G$ is a connected, simply connected nilpotent Lie group and $C$ is a maximal compact subgroup of~$\Aut(G)$.
As a discrete subgroup of $\calN\semiprod\U(1)$ (see section~\ref{heisenberg-isometries}),
the group~$\EE$ is an almost-crystallographic group with $G=\calN$ and $\U(1)\subset C\subset\Aut(\calN)$.
The projection of~$\EE=\<\iota_1,\iota_2\>$ to~$\c$ is a wallpaper group~$Q=\<j_1,j_2\>$,
where $j_k$ is the rotation of $\c$ around $\varphi_k$ through~$2\pi/3$ obtained by projecting~$\iota_k$ to~$\c$.
The wallpaper group $Q=\<j_1,j_2\>$ is generated by two order~$3$ rotations and has a presentation
\[\<j_1,j_2\st j_1^3=j_2^3=(j_1j_2)^3=1\>.\]
The standard notation for this wallpaper group is~{\bf p3}, see for example~\cite{BB}.
In the classification of $3$-dimensional almost-crystallographic groups in section~7.1 of~\cite{De}, the wallpaper group~{\bf p3} appears in case~13 on page~164.
In this case the group~$E$ is generated by elements $a,b,c,\al$ with relations
\[[b,a]=c^{k_1},~[c,a]=[c,b]=[c,\al]=1,~\al a=b\al c^{k_2},~\al b=a^{-1}b^{-1}\al c^{k_3},~\al^3=c^{k_4}.\]
We consider the generators $\iota_1=\al$ and $\iota_2=\al a$ so that $\al^3=(\al a)^3=1$.
The hypothesis $\al^3=1$ implies $k_4=0$.
The hypothesis $(\al a)^3=1$ can be rewritten as 
\begin{align*}
  1&=(\al a)^3=\al a\al(a\al)a
  =\al a\al(b^{-1}\al)b^{-1}ac^{k_3}
  =\al a(\al\al)a^{-1}b^{-1}ac^{k_2+k_3}\\
  &=(b a^{-1}b^{-1}a)c^{2k_2+k_3}
  =b([b,a])^{-1}b^{-1}c^{2k_2+k_3}
  =c^{-k_1+2k_2+k_3},
\end{align*}
hence $-k_1+2k_2+k_3=0$.
The translations~$T_1$ and~$T_2$ in Proposition~\ref{transl-subgroup} are
\[
  T_1=\iota_2\iota_1\iota_2=(\al a)\al^2 a=ba c^{k_2}
  \quad\text{and}\quad
  T_2=\iota_1\iota_1\iota_2=\al^3 a=a.
\]
Their commutator is
\[
  H
  =[T_1,T_2]
  =(ba c^{k_2})^{-1}a^{-1}(ba c^{k_2})a
  =a^{-1}b^{-1}a^{-1}baa
  =a^{-1}[b,a]a
  =c^{k_1}.
\]
On the other hand, the kernel of the map $\EE=\<\iota_1,\iota_2\>\to\<j_1,j_2\>$ given by $\iota_1\mapsto j_1$, $\iota_2\mapsto j_2$ is generated by $(\iota_1\iota_2)^3$.
We calculate
\[
  (\iota_1\iota_2)^3=(\al^2a)^3=\al^2 a\al^2(a\al^2)a
  =\al^2 a(\al b)a c^{k_2}
  =\al^2 b^{-1}(\al a)c^{k_2+k_3}  
  =c^{2k_2+k_3}.
\]
Using $-k_1+2k_2+k_3=0$ we can rewrite this as $(\iota_1\iota_2)^3=c^{k_1}$.
Hence the element $H=[T_1,T_2]=(\iota_1\iota_2)^3=c^{k_1}$ is the shortest vertical Heisenberg translation in $\EE=\<\iota_1,\iota_2\>$.
\end{remark}

\section{Proof of Proposition~\ref{prop1}}

\label{sec-proof-prop1}

\noindent
Let $\Ga=\<\iota_1,\iota_2,\iota_3\>$ be as in section~\ref{sec-param-332}.
In this section we will use Lemma~\ref{f(0)} to find conditions for the group $\Ga$ to be discrete.

\begin{proof}
We need to check that the conditions of Lemma~\ref{f(0)} are satisfied.
Note that $m\ge\log_e(3)$ implies
\[r=\cosh\left(\frac{m}{2}\right)\ge\frac{2}{\sqrt{3}}.\]
We first check that $|h(0)|\ge2$ for all vertical Heisenberg translations $h\ne\Id$ in $\<\iota_1,\iota_2\>$.
Any vertical translation in $\<\iota_1,\iota_2\>$ is a power of the vertical translation~$H$ by $(0,24r^2\sqrt{3}\cos^2(\theta))$.
We need the displacement of each vertical translation~$H^n$, $n\ne0$, to be at least the height of the spinal sphere, i.e.
\[24r^2\sqrt{3}\cos^2(\theta)\ge2\Longleftrightarrow r^2\cos^2(\theta)\ge\frac{\sqrt{3}}{36}.\]
The hypothesis $\cos(\al)\le-\frac{1}{2}$ for $\al\in(0,2\pi)$ implies $\frac{2\pi}{3}\le\al\le\frac{4\pi}{3}$ and hence $|\theta|=\left|\frac{\pi-\al}{2}\right|\le\frac{\pi}{6}$.
For $\cos(\theta)\ge\frac{\sqrt{3}}{2}$ and $r\ge\frac{2}{\sqrt{3}}$ we have
\[
  r^2\cos^2(\theta)\ge
  1>\frac{\sqrt{3}}{36},
\]
hence the condition $|h(0)|\ge2$ is satisfied for all vertical translations $h\ne\Id$ in $\<\iota_1,\iota_2\>$.

\bigskip
We will now check that $|f(0)|\ge2$ for all $f\ne\Id$ in $\<j_1,j_2\>$.
We can write every element~$f$ in $\<j_1,j_2\>$ as a word in the generators~$j_1$ and~$j_2$.
Figure~\ref{fig2} shows the points $f(0)$ for all words~$f$ of length up to~$6$ in the case $r=1$ and $\theta=0$.

\begin{figure}[h]
\centering
\includegraphics[width=0.5\textwidth]{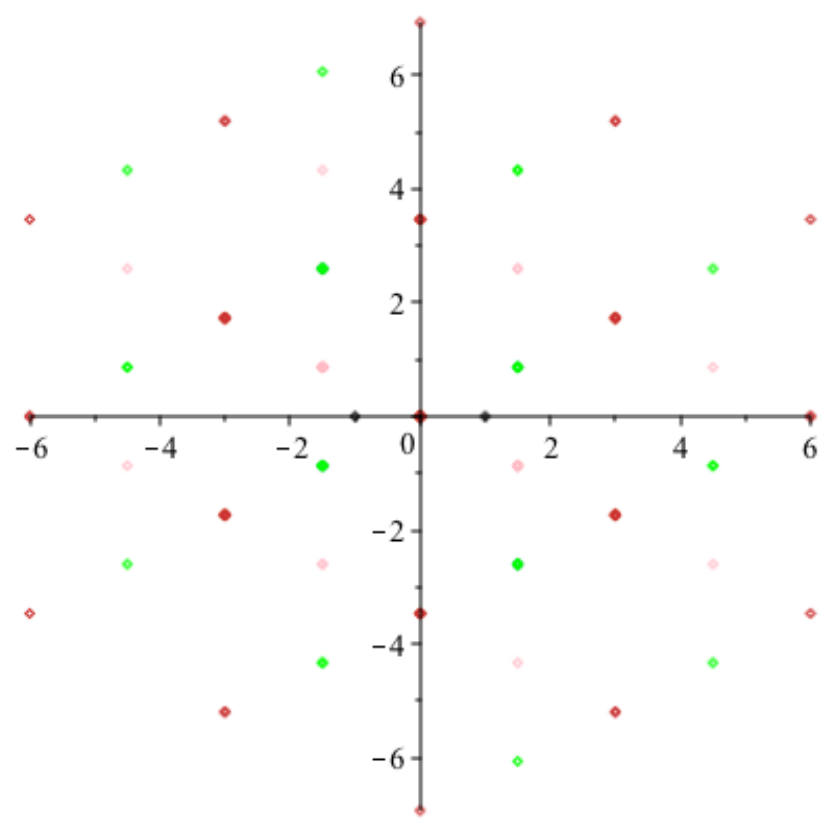}
\caption{Points $f(0)$ for all words $f$ up to length $6$.}
\label{fig2}
\end{figure}

The group $\<j_1,j_2\>$ is the projection to~$\c$ of the group~$\EE=\<\iota_1,\iota_2\>$.
For $k=1,2$, projecting $\iota_k$ to~$\c$, we obtain a rotation~$j_k$ of $\c$ through~$\frac{2\pi}{3}$ around $\varphi_k$.
These rotations are given by $j_k(z)=\mu\cdot z+(1-\mu)\cdot\varphi_k$, where $\mu=\exp(2\pi i/3)$.
According to Proposition~\ref{transl-subgroup}, every element of~$\EE$ is of the form $T_1^xT_2^yH^n\iota_1^\ell$ for some $x,y,n\in\z$ and $\ell\in\{0,1,2\}$,
where $T_1=\iota_2\iota_1\iota_2$, $T_2=\iota_1\iota_1\iota_2$ and $H=[T_1,T_2]$ are Heisenberg translations by $(v_1,t_1)$, $(v_2,t_2)$ and~$(0,\nu)$ respectively and
\[v_1=2r\sqrt{3}\cos(\theta)\cdot i,\quad v_2=r\cos(\theta)\cdot(3+i\sqrt{3}).\]
Projection to~$\c$ maps $H$ to the identity, $T_k$ to the Euclidean translation by~$v_k$, $T_1^xT_2^yH^n$ to the Euclidean translation by $x v_1+y v_2$ and $\iota_1$ to the rotation~$j_1$,
therefore every element of~$\<j_1,j_2\>$ is a product of a translation by~$xv_1+yv_2$ for some~$x,y\in\z$ and a rotation $j_1^\ell$ for some~$\ell\in\{0,1,2\}$.
Hence every point in the orbit of~$0$ under $\<j_1,j_2\>$ is of the form $p+xv_1+yv_2$, where $x,y\in\z$ and
\[p\in\{0,j_1(0),j_1^2(0)\}=\{0,(1-\mu)\varphi_1,(1-\bar\mu)\varphi_1\}.\]
Using $|v_1|^2=|v_2|^2=2\Re(v_1\bar{v}_2)=12r^2\cos^2(\theta)$,
we calculate
\begin{align*}
  &|p+xv_1+yv_2|^2\\
  &=x^2|v_1|^2+y^2|v_2|^2+2xy\Re(v_1\bar{v}_2)+2x\Re(p\bar{v}_1)+2y\Re(p\bar{v}_2)+|p|^2\\
  &=12r^2\cos^2(\theta)\cdot(x^2+xy+y^2)+2x\Re(p\bar{v}_1)+2y\Re(p\bar{v}_2)+|p|^2.
\end{align*}
We make a coordinate change $u=y-x$ and $v=x+y$, that is $x=(v-u)/2$ and $y=(u+v)/2$.
Points $(x,y)\in\z^2$ are mapped to points $(u,v)\in\z^2$ with $u\equiv v\mod 2$.
We obtain
\begin{align*}
  |p+xv_1+yv_2|^2
  &=3r^2\cos^2(\theta)\cdot(u^2+3v^2-2au-6bv+a^2+3b^2)\\
  &=3r^2\cos^2(\theta)\cdot((u-a)^2+3(v-b)^2),\\
\end{align*}
where
\begin{align*}
  a
  &=\frac{\Re(p(\bar{v}_1-\bar{v}_2))}{6r^2\cos^2(\theta)}
  =-\frac{\Re\left(p(3+i\sqrt{3})\right)}{6r\cos(\theta)},\\
  b&=-\frac{\Re(p(\bar{v}_1+\bar{v}_2))}{18r^2\cos^2(\theta)}
  =-\frac{\Re\left(p(1-i\sqrt{3})\right)}{6r\cos(\theta)}
\end{align*}
and
\[a^2+3b^2=\frac{|p|^2}{3r^2\cos^2(\theta)}.\]
Our aim is to show that $|p+xv_1+yv_2|^2\ge3r^2$ for all~$(x,y)\in\z^2$
excluding the case $p=0$, $x=y=0$ that corresponds to $f=\Id$. 
This is equivalent to $(u-a)^2+3(v-b)^2\ge\sec^2(\theta)$ for all~$(u,v)\in\z^2$ with $u\equiv v\mod 2$
excluding the case $a=b=u=v=0$.
Note that this inequality is always satisfied if $|u-a|\ge\sec(\theta)$ or $|v-b|\ge\sec(\theta)/\sqrt{3}$,
so we only need to check that
\[g(u,v)=(u-a)^2+3(v-b)^2-\sec^2(\theta)\ge0\]
for all $(u,v)\in\z^2$ with $u\equiv v\mod 2$ inside the bounding box
\[\left(a-\sec(\theta),a+\sec(\theta)\right)\times\left(b-\frac{\sec(\theta)}{\sqrt{3}},b+\frac{\sec(\theta)}{\sqrt{3}}\right).\]
In the following table we list the values of $a$, $b$ and $a^2+3b^2$
in terms of $t=\tan(\theta)$ and $\mu=\exp(2\pi i/3)=-\frac{1-i\sqrt{3}}{2}$ for $w\in\{\Id,j_1,j_1^2\}$:
\begin{center}
\begin{tabular}{|c|c|c|c|c| }
\hline\(w\) & \(p=w(0)\) & \(a\) & \(b\) & \(a^2+3b^2\) \\
\hline\(\Id\) & \(0\) & \(0\) & \(0\) & \(0\) \\
\hline\(j_1\) & $(1-\mu)\cdot\varphi_1$ & \(-1\) & \(-\frac{t}{\sqrt{3}}\) & \(t^2+1\) \\
\hline\(j_1^2\) & $(1-\bar\mu)\cdot\varphi_1$ & \(\frac{1}{2}(t\sqrt{3}-1)\) & \(-\frac{1}{6}(3+t\sqrt{3})\) & \(t^2+1\) \\
\hline
\end{tabular}
\end{center}


\bigskip\noindent
Under the assumption $|\theta|\le\frac{\pi}{6}$
we have $t=\tan(\theta)\in[-d,d]$ and $\sec(\theta)\in[1,2d]$, where $d=1/\sqrt{3}\approx0{.}577$.
In each of the three cases we list the bounds on $a$ and~$b$ and the size of the bounding box
\[(\min(a)-2d,\max(a)+2d)\times(\min(b)-2/3,\max(b)+2/3).\]
We then calculate
\begin{align*}
  g(u,v)
  &=(u-a)^2+3(v-b)^2-\sec^2(\theta)\\
  &=u^2+3v^2-2au-6bv+(a^2+3b^2)-(t^2+1)
\end{align*}
and check that $g(u,v)\ge0$ for all $(u,v)\in\z^2$ with $u=v\mod2$ inside the bounding box.
\begin{enumerate}[$\bullet$]
\item
$w=\Id$, $a=b=0$:
The bounding box
\[(-2d,2d)\times(-2/3,2/3)\subset(-2,2)\times(-1,1)\]
contains only one point~$(u,v)\in\z^2$ with $u=v\mod2$, the point $(u,v)=(0,0)$,
which corresponds to the excluded case $f=\Id$.
\item
$w=j_1$, $a=-1$, $b=-t/\sqrt{3}\in[-1/3,1/3]$:
The bounding box
\[(-1-2d,-1+2d)\times(-1,1)\subset(-3,1)\times(-1,1)\]
contains points $(0,0)$ and $(-2,0)$.
The function
\begin{align*}
  g(u,v)
  &=u^2+3v^2+2u+2tv\sqrt{3}+(t^2+1)-(t^2+1)\\
  &=u^2+3v^2+2u+2tv\sqrt{3}
\end{align*}
is non-negative: $g(0,0)=g(-2,0)=0$.
\item
$w=j_1^2$, $a=\frac{1}{2}(t\sqrt{3}-1)\in[-1,0]$, $b=-\frac{1}{6}(3+t\sqrt{3})\in[-2/3,-1/3]$:
The bounding box
\[(-1-2d,2d)\times(-4/3,1/3)\subset(-3,2)\times(-2,1)\]
contains points $(1,-1)$, $(0,0)$, $(-1,-1)$ and $(-2,0)$.
The function
\begin{align*}
  g(u,v)
  &=u^2+3v^2-u(t\sqrt{3}-1)+v(3+t\sqrt{3})+(t^2+1)-(t^2+1)\\
  &=u^2+3v^2+u+3v-(u-v)t\sqrt{3}
\end{align*}
 is non-negative:
\begin{align*}
 &g(0,0)=g(-1,-1)=0,~
 g(1,-1)=2-2t\sqrt{3}\ge0,~
 g(-2,0)=2+2t\sqrt{3}\ge0.
\end{align*}
\end{enumerate}
In all cases we have shown that $g(u,v)\ge0$, hence $|p+xv_1+yv_2|^2\ge3r^2$.
Under the assumption $m\ge\log_e(3)$ we have $3r^2=3\cosh^2\left(\frac{m}{2}\right)\ge4$.
Therefore $|f(0)|\ge2$ for all $f\ne\Id$ in $\<j_1,j_2\>$.
Hence all conditions of Lemma~\ref{f(0)} are satisfied
and we can conclude that the group $\<\iota_1,\iota_2,\iota_3\>$ is discrete.
\end{proof}

\section{Proof of Proposition~\ref{prop2}}

\label{sec-proof-prop2}

\noindent
Let $\Ga=\<\iota_1,\iota_2,\iota_3\>$ be an ultra-parallel $[m,m,0;3,3,2]$-triangle group as in section~\ref{sec-param-332}.
In this section we will use the following complex hyperbolic version of Shimizu's Lemma introduced in~\cite{P92,P94,P97} to find conditions for the group $\Ga$ not to be discrete.

\begin{lemma}
\label{shimizu}
Let $\Ga$ be a discrete subgroup of $\PU(2,1)$.
Let $g\in\Ga$ be a Heisenberg translation by $(\xi, \nu)$ and $h=(h_{ij})_{1\le i,j\le 3}\in\Ga$
be an element with $h(\infty)\ne\infty$,
then 
\[r_h^2 \le\rho_0(g(h^{-1}(\infty)),h^{-1}(\infty))\rho_0(g(h(\infty)), h(\infty)) +4\abs{\xi}^2,\]
where $\rho_0$ is the Cygan metric on~$\calN$
and 
\[r_h = \sqrt{\frac{2}{\abs{h_{22}-h_{23}+h_{32}-h_{33}}}}\]
is the radius of the isometric sphere of~$h$.
\end{lemma}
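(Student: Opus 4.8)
The plan is to transplant the classical Shimizu iteration to the complex hyperbolic setting by the method of isometric spheres, arguing by contradiction: assuming the displayed inequality fails, I would manufacture infinitely many distinct elements of $\Ga$ accumulating at a single isometry, contradicting discreteness. The first step is to record the geometry of isometric spheres with respect to the Cygan metric $\rho_0$. For $h$ with $h(\infty)\ne\infty$ the isometric sphere is the Cygan sphere centred at $h^{-1}(\infty)$ of radius $r_h$, the element $h$ carries it onto the isometric sphere of $h^{-1}$ (centred at $h(\infty)$, of the same radius $r_h$), and $h$ interchanges the two sides. The crucial analytic ingredient is the Cygan distortion formula
\[
  \rho_0(h(z),h(w))=\frac{r_h^2\,\rho_0(z,w)}{\rho_0(z,h^{-1}(\infty))\,\rho_0(w,h^{-1}(\infty))},
\]
the exact analogue of $\abs{h'(z)}=r_h^2/\abs{z-h^{-1}(\infty)}^2$ for M\"obius maps. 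I would derive this directly from the matrix action of $h$ on lifts in $\c^{2,1}$ together with the defining expression for $\rho_0$, and would read off the stated value of $r_h$ from the entries $h_{22}-h_{23}+h_{32}-h_{33}$.

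With this in hand I would set up the iteration. Writing $g$ for the given Heisenberg translation by $(\xi,\nu)$ and putting $h_0=h$, I define $h_{n+1}=h_n\,g\,h_n^{-1}$. Each $h_{n+1}$ is a conjugate of $g$ fixing $h_n(\infty)$; since $h_0(\infty)\ne\infty$, every $h_n$ still moves $\infty$ and so carries an isometric sphere of some radius $r_n$, with centres $h_{n+1}^{-1}(\infty)=h_n g^{-1}h_n^{-1}(\infty)$ and $h_{n+1}(\infty)=h_n g\, h_n^{-1}(\infty)$. This is the mechanism that in the classical case produces the doubly exponential behaviour of the isometric radius.

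Using that $g$ is itself a Cygan isometry and applying the distortion formula at each of the two centres $h_n^{-1}(\infty)$ and $h_n(\infty)$, I would derive a one-step estimate controlling $r_{n+1}$, together with the displacements of the new centres, in terms of $r_n$, the Cygan displacements $\rho_0(g(h_n^{-1}(\infty)),h_n^{-1}(\infty))$ and $\rho_0(g(h_n(\infty)),h_n(\infty))$, and $\abs{\xi}$. The product of the two displacements appearing in the statement reflects the two applications of the distortion formula, one at each centre, while the additive term $4\abs{\xi}^2$ measures the horizontal part of $g$, which prevents $g$ from being a pure vertical translation. One then checks that if $r_h^2$ strictly exceeds $\rho_0(g(h^{-1}(\infty)),h^{-1}(\infty))\,\rho_0(g(h(\infty)),h(\infty))+4\abs{\xi}^2$, the estimate forces the radii $r_n$ to grow without bound while the centres converge, so that the elements $h_n$ accumulate (converging to $g$); being distinct and non-identity, they violate discreteness, and the stated inequality must therefore hold.

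The hard part will be the one-step estimate. Although $\rho_0$ is a genuine metric and Heisenberg translations act as $\rho_0$-isometries, one must combine the distortion formula with the triangle inequality for $\rho_0$ while tracking both centres $h_n^{\pm1}(\infty)$ and the vertical component $\nu$ of $g$ simultaneously; it is precisely in this bookkeeping that the exact constant $4\abs{\xi}^2$ and the product form of the two Cygan displacements are pinned down, and where the recursion must be shown to diverge rather than merely to be bounded.
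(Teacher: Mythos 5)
First, a point about context: the paper itself gives no proof of this lemma. It is stated as an imported tool, explicitly attributed to Parker \cite{Par97}, and the paper's only use of it is in the proof of Proposition~\ref{prop2}. So there is no internal argument to compare yours against; the relevant benchmark is Parker's proof, and your outline does reconstruct the architecture of that kind of argument correctly. The ingredients you name are all genuine: the isometric sphere of $h$ is the Cygan sphere of radius $r_h$ centred at $h^{-1}(\infty)$, $h$ carries it to the isometric sphere of $h^{-1}$ centred at $h(\infty)$, the Cygan distortion formula you quote is a true statement and is the right analogue of the M\"obius derivative, and the contradiction mechanism via the conjugation sequence $h_{n+1}=h_n g h_n^{-1}$ (with $h_n\to g$, $h_n\ne g$ violating discreteness) is exactly the Shimizu-type iteration used in \cite{Par92} and \cite{Par97}.

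The genuine gap is the one you flag yourself and then defer: the ``one-step estimate'' is not bookkeeping, it \emph{is} the lemma. Everything you actually write down (sphere geometry, distortion formula, setting up the iteration, the limiting contradiction) is generic framing that would serve equally well for a bound with any other right-hand side; what has to be proved is that failure of the specific inequality $r_h^2 \le \rho_0\bigl(g(h^{-1}(\infty)),h^{-1}(\infty)\bigr)\,\rho_0\bigl(g(h(\infty)),h(\infty)\bigr)+4\abs{\xi}^2$ forces a contraction. That requires an explicit recursion: expressing $r_{n+1}$ and the two new displacements $\rho_0\bigl(g(h_{n+1}^{\pm1}(\infty)),h_{n+1}^{\pm1}(\infty)\bigr)$ in terms of the old ones via the distortion formula and the Heisenberg group law, and then an induction showing the controlling quantity decays (in fact doubly exponentially), so that $h_n\to g$ while $h_n\ne g$ (the latter is automatic here since $h_n(\infty)\ne\infty$ but $g(\infty)=\infty$, provided you note that the unique fixed point of a nontrivial Heisenberg translation is $\infty$ --- which also forces you to assume $g\ne\Id$, as for $g=\Id$ the stated inequality is simply false). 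Until that recursion is derived, nothing in your proposal certifies the constant $4\abs{\xi}^2$ rather than, say, $2\abs{\xi}^2$, nor the product structure of the first term; you have described a program, not a proof. Within the paper's own logic, the correct and intended move is simply to cite \cite{Par97}, where this computation is carried out.
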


\bigskip\noindent
We will now prove Proposition~\ref{prop2}:


\begin{proof}
We will apply Lemma~\ref{shimizu} to the vertical Heisenberg translation $g=(\iota_1\iota_2)^3$
and the element $h=\iota_3$ in $\Ga=\<\iota_1,\iota_2,\iota_3\>$.
The matrix of the element $h=\iota_3=\iota_3^{-1}$ is
\[h=h^{-1}=\begin{pmatrix} -1 & 0 & 0 \\ 0 & 1 & 0 \\ 0 & 0 & -1 \end{pmatrix}.\]
The radius of the isometric sphere of~$h$ is $r_h=1$.
To calculate $h(\infty)$ we first map $\infty$ from the Heisenberg space to the boundary of complex hyperbolic 2-space.
That is,
\[\infty\mapsto [0:1:-1]\in\bchp.\]
We apply $h$ to this point,
\[h([0:1:-1])= [0:1:1]\in\bchp.\]
Note that $h(\infty)\ne\infty$.
Mapping this point back to the Heisenberg space,
\[[0:1:1]\mapsto(0,0)\in\calN.\]
For a vertical Heisenberg translation~$g$, we have $\xi=0$ and $\rho_0(g(\zeta,\om),(\zeta,\om))=|\nu|^{\frac{1}{2}}$ for all~$(\zeta,\om)\in\calN$.
Substituting these values into the inequality given in Lemma~\ref{shimizu}, we obtain that if $|\nu|<1$ then the group is not discrete.
From Proposition~\ref{transl-subgroup} we know that $g=(\iota_1\iota_2)^3$ is a vertical Heisenberg translation by $(0,\nu)$ with $\nu=24\sqrt{3}r^2\cos^2(\theta)$,
hence the group~$\Ga$ is not discrete if
\[\cos^2(\theta)<\frac{1}{24\sqrt{3}r^2}.\]
Using $\cos(\al)=1-2\cos^2(\theta)$,
we conclude that the group~$\Ga$ is not discrete provided that
\[\cos(\al)>1-\frac{1}{12\sqrt{3}r^2}=1-\frac{1}{12\sqrt{3}\cosh^2\left(\frac{m}{2}\right)}.\qedhere\]
\end{proof}

\bigskip\noindent
{\bf Acknowledgements:}
We would like to thank John Parker for many helpful suggestions, in particular for pointing out a gap in an earlier version of Lemma~\ref{f(0)} and for telling us about the work of Karel Dekimpe.
We would like to thank the referees for their valuable comments.


\def\cprime{$'$}
\providecommand{\bysame}{\leavevmode\hbox to3em{\hrulefill}\thinspace}
\providecommand{\MR}{\relax\ifhmode\unskip\space\fi MR }
\providecommand{\MRhref}[2]{%
  \href{http://www.ams.org/mathscinet-getitem?mr=#1}{#2}
}
\providecommand{\href}[2]{#2}

\end{document}